\newtheorem{thm}{Theorem}
\newtheorem{cor}{Corollary}
\def\C{{\mathbb C}}
\def\D{{\mathbb D}}
\begin{document}

\title {A Characterization of concave mappings}

\author{V. Bravo\and R. Hern\'andez   \and O. Venegas }
\thanks{}
\address{Facultad de Ingenier\'ia y Ciencias\\
Universidad Adolfo Ib\'a\~nez\\
Av. Padre Hurtado 750, Vi\~na del Mar, Chile.}
\email{victor.bravo.g@uai.cl}

\address{Facultad de Ingenier\'ia y Ciencias\\
Universidad Adolfo Ib\'a\~nez\\
Av. Padre Hurtado 750, Vi\~na del Mar, Chile.}
\email{rodrigo.hernandez@uai.cl}

\address{Departamento de Ciencias Matem\'{a}ticas y F\'{\i}sicas. Facultad de Ingenier\'{\i}a\\ Universidad Cat\'olica de
Temuco.} \email{ovenegas@uct.cl}

\thanks{The
authors were partially supported by Fondecyt Grants \# 1190756.
\endgraf  {\sl Key words:} {Concave mappings, Schwarzian derivative, Caratheodory class.}
\endgraf {\sl 2020 AMS Subject Classification}. Primary: 30C45, 30C55; \,
Secondary: 31A10.}

\begin{abstract} This study focuses on Concave mappings, a class of univalent functions that exhibit a unique property: they map the unit disk onto a domain whose complement is convex. The main objective of this work is to characterize these mappings in terms of the real part of the expression $1 +zf''(z)/f'(z)$, considering scenarios where the omitted convex domain is either bounded or unbounded. In the case of a bounded convex domain, we investigate the pivotal role played by the Schwarzian derivative and the order of the functions in understanding the behavior and properties of these mappings.    
\end{abstract}

\maketitle

\section{Introduction}

A complex function defined in the unit disk is said to be concave as opposed to a convex function, that is, those functions that map the unit disk into the complement of a convex domain. In other words, the set of points it omits forms a convex domain in the complex plane. From this definition, we can conclude that these functions, or more precisely, the image domain $f(\mathbb{D})$ cannot be a bounded set and therefore, either inside or on the boundary, $f$ has a pole. Since we consider these functions to be univalent, this pole must be simple. This type of functions are the subject of study in this manuscript.

The two scenarios mentioned above, one with the pole inside the disk, and the other, when the pole is on the boundary, are treated differently and characterized ad-hoc. In the first case, we consider conformal functions that omit an unbounded convex domain and have the pole at $z=1$. This set of functions is denoted as $Co$. While it is true that characterizations of this set of functions exist in terms of the real part of $1+zf''(z)/f'(z)$, a particular case of these types of functions are those that have an angle at infinity less than or equal to $\pi\alpha$, and they are referred to in the literature as $Co(\alpha)$. Here, we propose a new characterization of functions in $Co$ using the well-known differential operators, Schwarzian derivative and the order operator, much like what happens with the extensively studied convex functions defined in the unit disk. The geometric significance of the Schwarzian derivative as a tool in the context of the geometric theory of functions is well known, and in this case, it plays a role equivalent to what it does for convex functions, and this is where we see the specific value of these notes. Section 2 is dedicated to this task, particularly focusing on the extreme case of equality in the obtained characterization. Subsection 2.2 deals with the aforementioned functions in the class $Co(\alpha)$ and their characterization in terms of the real part, which extends the known results up to now.

On the other hand, when the (simple) pole is in the interior, the omitted convex domain is bounded. Several authors have addressed this type of functions by considering the pole to be at $z=p$, where $p\in[0,1)$, and referring to this set as $Co(p)$. A particular case of study is when considering $p=0$, and it has indeed been characterized equivalently to the case of convex functions, except that in this case, the real part of $1+zf''(z)/f'(z)$ is negative. Note that this operator is well-defined when $f$ has a simple pole at the origin. In Section 3, we study this class of concave functions, provide a characterization that includes the aforementioned case, and calculate the value of the Schwarzian derivative for any function within this family. The section concludes with the case of equality in this characterization.

In this manuscript, we study some characterizations of this two classes of concave mappings, $Co(p)$ with $p\in[0,1)$ and $Co$, and the Schwarzian derivative for this mappings. 

\section{On the analytic concave mappings: The class $Co$.}

Let $\mathbb{D}$ denote the open unit disk in the complex plane, and let $\mathbb{C}$ be the complex plane. Consider analytic mappings $f:\mathbb{D}\to\mathbb{C}$ into the complex plane, such that the image of $f$ lies in the complement of a convex domain. Furthermore, let $f(1)=\infty$ be a normalization condition for these mappings. The set of all such functions is denoted by $Co$. The class called $Co(\alpha)$ consists of analytic concave mappings that are normalized such that $f(0) = 0$, $f'(0) = 1$, and $f(1) = \infty$. The opening angle of $f(\mathbb{D})$ at infinity is then bounded by $\pi\alpha$, where $\alpha \in (1,2]$. For further details on these classes, refer to \cite{AW05} as well as \cite{BH12, BPW12}.

In \cite{AW02} the authors shows that if $f\in Co$ then the function $\phi$ defined by the formula \begin{equation}\label{phi} \phi(z)=z+\dfrac{2f'(z)}{f''(z)}\end{equation} is a holomorphic mapping defined in $\D$ and such that $|\phi(z)|\leq 1$, $z=1$ is the unique fixed point, and $\phi'(1)\in[0,1/3]$.

The Schwarzian derivative for locally univalent meromorphic functions $f$, is a well-known differential operator given by $$Sf=\left(\dfrac{f''}{f'}\right)'-\frac12\left(\dfrac{f''}{f'}\right)^2.$$  Is a mathematical tool to measure the distortion caused by a complex mapping. It provides a quantitative measure of how a mapping deforms angles and shapes locally and captures the second-order behavior of $f(z)$. In fact, $f$ maps conformally the unit disk onto a convex domain if and only if $$|Sf(z)|(1-|z|^2)^2+2\left|\dfrac{\varphi(z)-\overline{z}}{1-z\varphi(z)}\right|^2\leq 2,$$ when $$\varphi(z)=\dfrac{f''(z)/f'(z)}{2+zf''(z)/f'(z)}.$$ The reader can be found this characterization in \cite{CDO11}.

Let $f:\D\to \C$ be a conformal mapping. In \cite{CP07}, the authors introduce the definition of inferior order of $f$ as $$\inf_{z\in\D}|A_f(z)|=\inf_{z\in\D}\frac12\left|(1-|z|^2)\frac{f''}{f'}(z)-2\overline z\right|.$$ In the same paper, they proved that $f$, with the normalization that $f(1)=\infty$, is concave if and only if $\inf |A_f|=1$.The reader can see this result as equivalent to the well-known fact that a function is convex if and only if $\sup |A_f|=1$. Likewise, this is also equivalent to $$(1-|z|^2)^2|Sf(z)|+2|A_f(z)|\leq 2,$$ see \cite[eq. (9)]{CDO11}. The following results, is in this sense, can be understood as the generalization of this inequality.

\begin{thm} Let $f:\D\to\C$ be a conformal mapping with $f(1)=\infty$, then 
$f\in Co$ if and only if \begin{equation}\label{thm 1}|Sf(z)|(1-|z|^2)^2+2\leq 2|A_f(z)|^2.\end{equation}
\end{thm}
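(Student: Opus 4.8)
The plan is to connect the proposed inequality \eqref{thm 1} to the already-quoted characterizations: the fact from \cite{AW02} that $f\in Co$ iff the map $\phi(z)=z+2f'(z)/f''(z)$ is a self-map of $\D$ with fixed point $z=1$, and the fact from \cite{CP07} that $f$ (with $f(1)=\infty$) is concave iff $\inf_{z\in\D}|A_f(z)|=1$. The first step is to rewrite everything in terms of a single holomorphic object. Note that $\varphi(z)$ from the convexity criterion and $\phi(z)$ from \eqref{phi} are essentially reciprocals up to an affine change: $\phi(z)-z = 2f'(z)/f''(z)$, so $f''/f' = 2/(\phi(z)-z)$, and the quantity $A_f(z) = \tfrac12\left[(1-|z|^2)\tfrac{f''}{f'}(z) - 2\overline z\right]$ becomes
\begin{equation*}
A_f(z) = \frac{1-|z|^2}{\phi(z)-z} - \overline{z} = \frac{1-z\overline{z} - \overline{z}(\phi(z)-z)}{\phi(z)-z} = \frac{1-\overline{z}\,\phi(z)}{\phi(z)-z}.
\end{equation*}
Thus $|A_f(z)| = \left|\dfrac{1-\overline z\,\phi(z)}{\phi(z)-z}\right|$, the reciprocal of the pseudohyperbolic-type expression appearing in the convexity Schwarzian bound. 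This identity is the linchpin, and I expect establishing it cleanly (together with its companion expression for $Sf$) to be the main bookkeeping obstacle.

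The second step is to compute $Sf$ in terms of $\phi$. Since $f''/f' = 2/(\phi - z)$, differentiating gives $(f''/f')' = -2(\phi'-1)/(\phi-z)^2$, and hence
\begin{equation*}
Sf = \left(\frac{f''}{f'}\right)' - \frac12\left(\frac{f''}{f'}\right)^2 = \frac{-2(\phi'(z)-1)}{(\phi(z)-z)^2} - \frac{2}{(\phi(z)-z)^2} = \frac{-2\phi'(z)}{(\phi(z)-z)^2}.
\end{equation*}
Therefore $(1-|z|^2)^2|Sf(z)| = 2\,|\phi'(z)|\,\dfrac{(1-|z|^2)^2}{|\phi(z)-z|^2}$. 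Dividing the target inequality \eqref{thm 1} through by $2$ and substituting, it becomes equivalent to
\begin{equation*}
|\phi'(z)|\,\frac{(1-|z|^2)^2}{|\phi(z)-z|^2} + 1 \le \frac{|1-\overline{z}\,\phi(z)|^2}{|\phi(z)-z|^2},
\end{equation*}
i.e., after multiplying by $|\phi(z)-z|^2$,
\begin{equation*}
|\phi'(z)|\,(1-|z|^2)^2 \le |1-\overline{z}\,\phi(z)|^2 - |\phi(z)-z|^2 = (1-|z|^2)(1-|\phi(z)|^2),
\end{equation*}
where the last equality is the standard identity $|1-\overline z w|^2 - |w-z|^2 = (1-|z|^2)(1-|w|^2)$. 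So \eqref{thm 1} is equivalent to the clean statement
\begin{equation*}
|\phi'(z)|\,(1-|z|^2) \le 1 - |\phi(z)|^2, \qquad z\in\D.
\end{equation*}

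The third step is to recognize this as exactly the Schwarz--Pick inequality for the holomorphic self-map $\phi:\D\to\D$. By the Schwarz--Pick lemma, \emph{every} holomorphic $\phi$ with $|\phi|\le 1$ satisfies $|\phi'(z)|(1-|z|^2)\le 1-|\phi(z)|^2$; conversely, if this inequality holds for all $z$, then $\phi$ maps $\D$ into $\overline{\D}$, and by the open mapping theorem either $\phi$ is constant (excluded here, since $f''/f'$ would have to vanish identically, contradicting local univalence) or $\phi(\D)\subset\D$. So the chain of equivalences closes: for the forward direction, $f\in Co$ gives, via \cite{AW02}, a self-map $\phi$, whence Schwarz--Pick yields \eqref{thm 1}; for the converse, \eqref{thm 1} forces $\phi$ to be a self-map of $\D$, and one must then recover that $f\in Co$. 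Here I would invoke the \cite{CP07} criterion $\inf_{z\in\D}|A_f(z)|=1$: from the identity $|A_f(z)| = |1-\overline z\phi(z)|/|\phi(z)-z|$ and the self-map property one checks $|A_f(z)|\ge 1$ everywhere, and that the infimum $1$ is attained by letting $z\to 1$ (using that $z=1$ is the fixed point of $\phi$, so $\phi(z)-z\to 0$ while $1-\overline z\phi(z)\to 0$ at comparable rates governed by $\phi'(1)\in[0,1/3]$); this identifies $f$ as concave with pole at $1$, i.e. $f\in Co$. The main obstacle is thus not any single hard estimate but making the limiting argument at $z=1$ rigorous — controlling the ratio $(1-\overline z\phi(z))/(\phi(z)-z)$ as $z\to1$ — and carefully justifying the non-constancy of $\phi$; everything else reduces to the Schwarz--Pick lemma after the algebraic identities above.
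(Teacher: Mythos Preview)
Your forward direction is exactly the paper's: express $f''/f'=2/(\phi-z)$, compute $Sf=-2\phi'/(\phi-z)^2$, recognize $A_f=(1-\overline z\,\phi)/(\phi-z)$, and reduce \eqref{thm 1} to the Schwarz--Pick inequality for the self-map $\phi$ furnished by \cite{AW02}.

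For the converse you are working harder than necessary. You route the argument back through $\phi$ (defining $\phi=z+2f'/f''$, deducing from \eqref{thm 1} that $|\phi'|(1-|z|^2)\le 1-|\phi|^2$, hence $|\phi|\le 1$, hence $|A_f|\ge 1$), and then worry about a boundary limit at $z=1$ to force $\inf_{z\in\D}|A_f(z)|=1$. The paper bypasses all of this: since $|Sf(z)|(1-|z|^2)^2\ge 0$, inequality \eqref{thm 1} \emph{immediately} gives $2\le 2|A_f(z)|^2$, i.e.\ $|A_f(z)|\ge 1$ for every $z\in\D$, and \cite[Thm.~4(ii)]{CP07} states that this pointwise condition is already equivalent to $f\in Co$. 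So no detour through $\phi$, no non-constancy argument, and no limiting analysis at $z=1$ is needed for the sufficiency. Your route is not wrong, but the ``main obstacle'' you flag simply disappears once you use the $|A_f|\ge 1$ form of the Cruz--Pommerenke criterion rather than the $\inf|A_f|=1$ phrasing.
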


\begin{proof} Since $f\in Co$ we have that there exists a holomorphic mapping $\phi:\D\to\overline{\D}$ such that the equation (\ref{phi}) holds (see \cite{AW02}). Thus, $f''/f'(z)=2/(\phi(z)-z)$, which implies that $Sf(z)=-2\phi'(z)/(\phi(z)-z)^2.$ From where, and using the Schwarz's Lemma, we have that $$|Sf(z)|(1-|z|^2)^2\leq \frac{2(1-|\phi(z)|^2)(1-|z|^2)}{|\phi(z)-z|^2}=2\left(\frac{|1-\overline{z}\phi(z)|^2}{|\phi(z)-z|^2}-1\right).$$ By equation (\ref{phi}), a direct computation shows that $$\frac{1-\overline z\phi(z)}{\phi(z)-z}=\frac12\left((1-|z|^2)\frac{f''}{f'}(z)-2\overline z\right)=A_f(z),$$ which completes the proof of the necessity part. Now, suppose that $|Sf(z)|(1-|z|^2)^2+2\leq 2|A_f(z)|^2$, then $|A_f(z)|\geq 1$ which is equivalent with the function $f$ is concave (see \cite[Thm. 4 part (ii)]{CP07}). Then the proof is completed.
\end{proof}

\subsection{The case of equality in Theorem 1} The equality in (\ref{thm 1}) for some $z\in\D$, implies that $\phi$ satisfies the equality case in the Schwarz's Lemma, therefore $\phi$ is an automorphism of the unit disk given by
$\phi(z)=\nu (a-z)/(1-\overline{a}z)$ for some $a\in\D$ and $|\nu|=1$. Since $\phi$ has a fixed point at $z=1$ we have that $\nu=-(1-\overline{a})/(1-a)$, and $\phi'(1)=(1-|a|^2)/|1-a|^2\in[0,1/3]$. Moreover, $$\dfrac{f''}{f'}(z)=\dfrac{2(1-\overline{a}z)}{\nu a-(\nu+1)z+\overline{a}z^2}.$$ Observe that if $a=0$ then $\phi(z)=z$ which says that all points are fixed point, which is a contradiction with Theorem 2 part (iii) in \cite{AW02}. Since $a\neq 0$,  $\nu a/\overline{a}\neq 1$, in other case, $\mbox{Re}\{a\}=|a|^2$ which implies that $\phi'(1)=1$. Therefore, we have that $$\dfrac{f''}{f'}(z)=\dfrac{2(1-\overline{a}z)}{\overline{a}(z-1)(z-\lambda)}=\dfrac{2(1-\overline{a})}{\overline{a}(1-\lambda)}\cdot\dfrac{1}{z-1}+\dfrac{2(\nu a-1)}{\overline{a}(1-\lambda)}\cdot\dfrac{1}{z-\lambda}, \quad \lambda=\nu a/\overline{a}.$$Then $$f'(z)=A(z-1)^{-b-2}(z-\lambda)^b,$$ for some complex constant $A$ and $b=2(\nu a-1)/(\overline{a}-\nu a)=(1-|a|^2)/(|a|^2-\mbox{Re}\{a\})$. A straightforward computation and the fact that $\phi'(1)\in[0,1/3]$, shows that $b=2\phi'(1)/(1-\phi'(1))\in[0,1]$. In this case, $$f(z)=A\left(\dfrac{z-\lambda}{z-1}\right)^{1+b}+B,$$ for some constant $B$. Observed that $f$ is an affine maps of the unit disk into a concave domain. This domain is defined as the complement of an angle less than or equal to $\pi$ centered at the origin.   

\subsection{The class of $Co(\alpha)$.} In their seminal paper \cite{AW02}, the authors introduced a noteworthy class of mappings, characterized by a specific normalization condition. This condition ensures that the angle at infinity in $f(\mathbb{D})$ is at most $\alpha\pi$, where $\alpha$ lies in the interval $[1,2]$. Notably, when $\alpha=1$ the mapping reduces to the well-known half-plane mapping $\ell(z)=z/(1-z)$ or affine transformation of this mapping. These functions are commonly referred to as $Co(\alpha)$ mappings. For a more comprehensive understanding of this family, interested readers can refer to the following references: \cite{AW05, BH12, BPW09, BPW10, BPW12}. 

In particular, the author in \cite{BPW12} demonstrates that $(1-|z|^2)^2|Sf(z)|\leq 2(\alpha^2-1)$, which can be derived using our last theorem and the fact that $\sup |A_f|=\alpha$ (see \cite{CP07}) for any mapping belonging to this class. In \cite{AW05} the authors proved (see Theorem 2) that, for $\alpha\in(1,2]$,  $f\in Co(\alpha)$ if and only if \begin{equation}\label{Co(alpha)} \text{Re}\left\{\dfrac{\alpha+1}{2}\cdot\dfrac{1+z}{1-z}-1-z\dfrac{f''}{f'}(z)\right\} >0.\end{equation} Similarly to the case of convex functions, this last inequality can be improved in the sense that not only is the real part of this expression positive, but it is also greater than or equal to a positive expression involving the same elements. This is the goal of the next result.

\begin{thm}
$f\in Co(\alpha)$ with $\alpha\in(1,2]$ if and only if $$\textup{Re}\left\{\dfrac{\alpha+1}{2}\cdot\dfrac{1+z}{1-z}-1-z\dfrac{f''}{f'}(z)\right\}\geq \dfrac{1}{2(\alpha-1)}\left|\dfrac{f''}{f'}-\dfrac{\alpha+1}{1-z}\right|^2(1-|z|^2).$$
\end{thm}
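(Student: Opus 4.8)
The plan is to reduce this to a statement about a single function in the Carathéodory class, mirroring the strategy used for convex functions. Starting from the characterization \eqref{Co(alpha)}, define
\[
p(z)=\frac{\alpha+1}{2}\cdot\frac{1+z}{1-z}-1-z\frac{f''}{f'}(z),
\]
so that $f\in Co(\alpha)$ is equivalent to $\operatorname{Re}p(z)>0$ on $\D$, i.e. $p$ maps $\D$ into the right half-plane with $p(0)=(\alpha+1)/2-1=(\alpha-1)/2>0$. The aim is to show that $\operatorname{Re}p(z)$ is bounded below by the asserted quadratic expression. First I would compute $p'(z)$ explicitly in terms of $f''/f'$ and its derivative; using the Schwarzian-type identities already invoked in the paper (writing $f''/f'=2/(\phi(z)-z)$ for the appropriate $\phi$), one expresses the quantity $|f''/f'-(\alpha+1)/(1-z)|^2$ appearing on the right-hand side in terms of $|p'(z)|$ or of $|p(z)-\text{something}|$. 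The key algebraic step is to recognize that
\[
\frac{f''}{f'}(z)-\frac{\alpha+1}{1-z}
\]
is, up to an explicit nonvanishing factor, the derivative-type quantity that controls the Schwarz–Pick defect of $p$.

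Next I would invoke the standard Schwarz–Pick estimate for functions with positive real part: if $\operatorname{Re}p>0$ on $\D$ then
\[
|p'(z)|\le \frac{2\operatorname{Re}p(z)}{1-|z|^2}.
\]
Equivalently, writing $w=(p-\beta)/(p+\bar\beta)$ for a suitable constant $\beta$ (or composing $p$ with a Cayley map to land in $\D$ and applying the classical Schwarz–Pick inequality), one gets a sharp lower bound for $\operatorname{Re}p(z)$ of exactly the quadratic form $\frac{1}{2(\alpha-1)}|\cdots|^2(1-|z|^2)$ once the constants are matched. The factor $1/(2(\alpha-1))$ should emerge precisely from $p(0)=(\alpha-1)/2$ and the normalization $f(0)=0$, $f'(0)=1$, so that the extremal configuration corresponds to equality in Schwarz–Pick — the same mechanism as in the bounded-domain characterizations in \cite{CDO11}. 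For the converse, note that the inequality forces $\operatorname{Re}p(z)\ge 0$, and a minimum-principle/boundary argument upgrades this to $\operatorname{Re}p(z)>0$ (strict, since $p$ is nonconstant as $\alpha>1$), which by \eqref{Co(alpha)} places $f$ in $Co(\alpha)$.

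The main obstacle I anticipate is the bookkeeping in the first paragraph: correctly identifying the auxiliary disk-valued function whose Schwarz–Pick inequality yields exactly the stated constant. The expression $z f''/f'$ and the Möbius term $\frac{\alpha+1}{2}\cdot\frac{1+z}{1-z}$ must be combined so that the "defect" $2\operatorname{Re}p(z) - (1-|z|^2)|p'(z)|$ (or its analogue) collapses to a perfect square multiple of $|f''/f' - (\alpha+1)/(1-z)|^2$; getting the pole at $z=1$ and the factor $(\alpha+1)$ to cancel cleanly is where the computation is delicate. A secondary subtlety is justifying that $p$ genuinely has positive real part rather than merely nonnegative in the converse direction, which requires that $f$ be a genuine conformal map with the stated normalization (so $p$ is nonconstant), handled by the open mapping theorem. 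Once the right auxiliary function is pinned down, both directions follow from the classical subordination/Schwarz–Pick toolkit without further difficulty.
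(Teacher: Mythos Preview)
Your overall framework is right---define $p(z)=\dfrac{\alpha+1}{2}\cdot\dfrac{1+z}{1-z}-1-z\dfrac{f''}{f'}(z)$, note $p(0)=(\alpha-1)/2$, and reduce to a Schwarz-type lemma---but the specific mechanism you emphasize does not work. The quantity $\dfrac{f''}{f'}-\dfrac{\alpha+1}{1-z}$ is \emph{not} a derivative of $p$: a two-line computation gives
\[
\frac{f''}{f'}(z)-\frac{\alpha+1}{1-z}=-\frac{p(z)-p(0)}{z},
\]
a difference quotient, not $p'$. The Schwarz--Pick derivative bound $|p'|\le 2\operatorname{Re}p/(1-|z|^2)$ is therefore a red herring: it produces an estimate \emph{linear} in $|p'|$, whereas the right-hand side of the theorem is \emph{quadratic} in the quantity above, and no algebraic identity converts one into the other. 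The detour through $f''/f'=2/(\phi-z)$ from Theorem~1 is likewise unnecessary here.

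Your alternative suggestion---compose $p$ with a Cayley map and land in $\D$---is exactly what the paper does, and it is the step that makes the constants come out. Writing $p=\dfrac{\alpha-1}{2}\cdot\dfrac{1+\omega}{1-\omega}$ with $\omega$ a Schwarz map and then $\omega=z\varphi$, the paper checks the algebraic identity
\[
E:=\frac{1}{\alpha-1}\Bigl(\frac{\alpha+1}{1-z}-\frac{f''}{f'}\Bigr)=\frac{\varphi}{1-z\varphi},
\qquad\text{equivalently}\qquad \varphi=\frac{E}{1+zE}.
\]
The single analytic input is $|\varphi|\le 1$, which unwinds to $1+2\operatorname{Re}(zE)\ge(1-|z|^2)|E|^2$; since $p=\dfrac{\alpha-1}{2}(1+2zE)$ this is exactly the asserted inequality. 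No derivative estimate is used---only Schwarz's lemma in the form $|\omega(z)|\le|z|$. Your converse argument (the inequality forces $\operatorname{Re}p\ge 0$, hence $>0$ by the open mapping theorem, hence \eqref{Co(alpha)}) is fine.
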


\begin{proof} By inequality (\ref{Co(alpha)}), there exists a Schwarz's map $\omega$ such that $$\dfrac{\alpha+1}{2}\cdot\dfrac{1+z}{1-z}-1-z\dfrac{f''}{f'}(z)=\frac{\alpha-1}{2}\cdot\frac{1+\omega}{1-\omega}.$$ Since $\omega(z)=z\varphi(z)$ for some holomorphic mapping $\varphi:\D\to\D$, we have that $$E:=\frac{1}{\alpha-1}\left(\frac{\alpha+1}{1-z}-\frac{f''}{f'}(z)\right)=\frac{\varphi(z)}{1-z\varphi(z)},$$ which is equivalent to $$\varphi(z)=\frac{E}{1+zE}.$$ Since $|\varphi(z)|\leq 1$ it follows that $$1+2\text{Re}\{zE\}\geq |E|^2(1-|z^2|).$$ But, $$\dfrac{\alpha+1}{2}\cdot\dfrac{1+z}{1-z}-1-z\dfrac{f''}{f'}(z)=\frac{\alpha-1}{2}\left(1+2zE\right),$$ using the last inequality, we conclude as desire. 
\end{proof}

\section{On the meromorphic concave mappings: The class $Co(p)$.}
When this pole is located within $\mathbb{D}$, a well-known class of such mappings arises, known as $Co(p)$, which is normalized with respect to the pole at $p \in [0,1)$. These classes have been extensively studied in \cite{AW02, AW05, APW04, APW06}. Moreover, in \cite{CDO12}, the authors provide a characterization of concave functions with a pole at the origin, normalized as follows: $f(z)=1/z+b_0+b_1z+\cdots$. Now, in general, let $f$ be a concave mapping with a simple pole at $p\in[0,1)$ normalized to \begin{equation}\label{Co(p)} f(z)=\dfrac{1}{z-p}+b_0+b_1(z-p)+\cdots, \quad z\in\D.\end{equation} This class of mappings are called $Co(p)$.  This functions satisfies that\begin{equation*} \mbox{Re}\left\{1+z\dfrac{f''}{f'}(z)+\dfrac{2p}{z-p}-\dfrac{2pz}{1-pz}\right\}<0.\end{equation*} The reader can found more information in \cite{APW04,BPW09,CDO12,Wi04}. In particular, the authors in \cite{CDO12} showed that when the pole at the origin, this functions can be characterized as $$\mbox{Re}\left\{1+z\dfrac{f''}{f'}(z)\right\}<0.$$ Furthermore, within the context of the same paper, it is demonstrated that this inequality can be further refined through, $f\in Co(0)$ if and only if \begin{equation}\label{improve-Co(0)}
\mbox{Re}\left\{1+z\dfrac{f''}{f'}(z)\right\}\leq -\dfrac{1}{4}(1-|z|^4)\left|z\frac{f''}{f'}(z)\right|^2.\end{equation} The equality at a specific point is attained if and only if the dilation function $\omega(z)=\lambda z^2$, where $\lambda$ is an arbitrary unimodular constant. Considering the normalization that the residue at the pole located at $z=0$ admits only one possibility, namely $\lambda=1$. Consequently, in this particular scenario, equality at a point distinct from the origin arises solely when the function $f$ is defined as follows: $$f(z)=\frac{1}{z}+a_0+z.$$  

The subsequent theorem is presented in \cite{CDO12}; however, it is not explicitly delineated. The proof outlined herein significantly diverges from the rationale presented in that publication.

\begin{thm}
$f\in Co(0)$ if and only if $|Sf(z)|(1-|z|^2)^2+2(2|\varphi|+1)|\Phi|^2\leq 2(2|\varphi|+1)$, where $$\varphi(z)=\frac{z+2f'(z)}{z^3f''(z)},\quad \quad  \mbox{and} \quad \quad \Phi(z)=\frac{\overline{z}-z\varphi(z)}{1-z^2\varphi(z)}.$$
\end{thm}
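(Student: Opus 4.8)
The plan is to imitate the proof of Theorem~1, replacing the map $\phi$ of (\ref{phi}) by a desingularized analogue. Set $\psi(z)=z+2f'(z)/f''(z)$, so that $f''(z)/f'(z)=2/(\psi(z)-z)$ exactly as for the class $Co$; since $f$ is normalized by (\ref{Co(p)}) with $p=0$, a short expansion at the origin shows $\psi$ vanishes there to order at least three, so that $\varphi(z)=\psi(z)/z^{3}$ is the holomorphic map in the statement (equivalently $z^{2}\varphi(z)=1+2f'(z)/(zf''(z))$). First I would record the dictionary with the known characterization of $Co(0)$: putting $\omega:=z^{2}\varphi$, the identity just noted gives $1+zf''(z)/f'(z)=-(1+\omega(z))/(1-\omega(z))$, so by the result of \cite{CDO12} quoted above, $f\in Co(0)$ is equivalent to $\textup{Re}\,(1+\omega)/(1-\omega)>0$, i.e. to $\omega$ being a holomorphic self-map of $\D$; since $\omega$ then has a double zero at the origin, $\varphi=\omega/z^{2}$ maps $\D$ into $\overline{\D}$, and Schwarz--Pick applies to $\varphi$.

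Second, I would rewrite both quantities entering the inequality through $\varphi$. From $f''/f'=2/(\psi-z)$ one obtains, exactly as in Theorem~1, $Sf(z)=-2\psi'(z)/(\psi(z)-z)^{2}$; substituting $\psi=z^{3}\varphi$ and cancelling a common factor $z$ yields
$$|Sf(z)|\,(1-|z|^{2})^{2}=\frac{2\bigl|3\varphi(z)+z\varphi'(z)\bigr|\,(1-|z|^{2})^{2}}{|1-z^{2}\varphi(z)|^{2}}.$$
Independently, expanding $|1-z^{2}\varphi|^{2}-|\overline z-z\varphi|^{2}=(1-|z|^{2})(1-|z|^{2}|\varphi|^{2})$ gives
$$1-|\Phi(z)|^{2}=\frac{(1-|z|^{2})\bigl(1-|z|^{2}|\varphi(z)|^{2}\bigr)}{|1-z^{2}\varphi(z)|^{2}}.$$
Dividing, the inequality of the theorem becomes equivalent to the pointwise estimate
$$\bigl|3\varphi(z)+z\varphi'(z)\bigr|\,(1-|z|^{2})\ \le\ \bigl(2|\varphi(z)|+1\bigr)\bigl(1-|z|^{2}|\varphi(z)|^{2}\bigr),\qquad z\in\D.$$

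For necessity it remains to prove this estimate. Since $\varphi:\D\to\overline{\D}$, the triangle inequality together with $|\varphi'(z)|\le(1-|\varphi(z)|^{2})/(1-|z|^{2})$ gives $|3\varphi+z\varphi'|(1-|z|^{2})\le 3|\varphi|(1-|z|^{2})+|z|(1-|\varphi|^{2})$, so it suffices to verify
$$3s(1-t^{2})+t(1-s^{2})\ \le\ (2s+1)(1-s^{2}t^{2}),\qquad s\in[0,1],\ t\in[0,1],$$
with $s=|\varphi(z)|$, $t=|z|$. A direct factorization shows the difference of the two sides equals $(1-s)\bigl[\,1+s(2s+3)t^{2}-(1+s)t\,\bigr]$; the bracket is an upward parabola in $t$ (it is $1-t$ when $s=0$) with value $1$ at $t=0$, hence if its vertex lies at $t\ge1$ it is $\ge 2s(s+1)\ge0$ throughout $[0,1]$, while if the vertex lies in $(0,1)$ — which forces $s>(\sqrt{41}-5)/8$ — its minimum value is nonnegative by the discriminant inequality $7s^{2}+10s-1\ge0$. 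This establishes the estimate, hence the inequality of the theorem.

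For sufficiency, assume the inequality of the theorem; in particular all the quantities involved are defined, so $\varphi$ is holomorphic on $\D$. Its left side is nonnegative and $2|\varphi(z)|+1>0$, so after the reduction above $1-|z|^{2}|\varphi(z)|^{2}\ge0$, i.e. $|\varphi(z)|\le 1/|z|$ for $z\neq0$; applying the maximum principle on the circles $|z|=r$ and letting $r\to1$ gives $|\varphi|\le1$ on $\D$. Then $\omega=z^{2}\varphi$ is a holomorphic self-map of $\D$, so $(1+\omega)/(1-\omega)$ has positive real part, and since $1+zf''/f'=-(1+\omega)/(1-\omega)$ this forces $\textup{Re}\{1+zf''(z)/f'(z)\}<0$, i.e. $f\in Co(0)$ by \cite{CDO12}. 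Tracking the equality case — the estimate above can be an equality at an interior point only when $|\varphi|\equiv1$, which the residue normalization pins to $\varphi\equiv1$, i.e. $\omega(z)=z^{2}$ — recovers $f(z)=1/z+a_{0}+z$, matching the equality case of (\ref{improve-Co(0)}). The step I expect to be the main obstacle is the second one: producing the closed forms above and, above all, noticing that once $Sf$ is expressed through $\psi=z^{3}\varphi$ the whole two-sided inequality collapses to a single one-variable polynomial inequality; after that reduction both implications are routine, via Schwarz--Pick and an elementary discriminant computation.
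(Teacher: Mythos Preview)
Your argument is correct, and the overall architecture matches the paper's: write $\omega=z^{2}\varphi$ with $\varphi:\D\to\overline{\D}$, express $Sf$ as $-2(3\varphi+z\varphi')/(1-z^{2}\varphi)^{2}$, identify $1-|\Phi|^{2}$ with $(1-|z|^{2})(1-|z|^{2}|\varphi|^{2})/|1-z^{2}\varphi|^{2}$, and then combine a triangle inequality with Schwarz--Pick. The one genuine difference is \emph{where} Schwarz--Pick is applied. The paper introduces the auxiliary map $\phi(z)=z\varphi(z)$, splits $3\varphi+z\varphi'=2\varphi+\phi'$, and uses $|\phi'|(1-|z|^{2})\le 1-|\phi|^{2}$; after that the remaining estimate is the single line $(1-|z|^{2})/(1-|\phi|^{2})\le 1$, which holds because $|\phi|=|z|\,|\varphi|\le |z|$. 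You instead apply Schwarz--Pick directly to $\varphi$, which forces you to verify the two-variable polynomial inequality $3s(1-t^{2})+t(1-s^{2})\le(2s+1)(1-s^{2}t^{2})$ by a factorization and discriminant check. Both routes are valid; the paper's is shorter, while yours is more self-contained in that it never names $\phi$. A bonus of your write-up is that you actually supply the sufficiency direction (via $|\Phi|\le1\Rightarrow |\varphi(z)|\le 1/|z|$ and the maximum principle) and the equality case, whereas the paper's proof only argues necessity despite the ``if and only if'' in the statement.
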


\begin{proof} Let $f$ be a normalized function in $Co(0)$ given by $f(z)=\dfrac{1}{z}+b_0+b_1z+b_2z^2+b_3z^3+\cdots$. Then 
$z\dfrac{f''}{f'}(z)=-2-2b_1z^2-2b_1z^3-6b_2z^3-\cdots$ hence $z\dfrac{f''}{f'}(z)=\dfrac{2}{\omega-1}$, therefore $$z\dfrac{f''}{f'}(z)+2=\dfrac{2\omega}{\omega-1}.$$

From $z\dfrac{f''}{f'}(z)=\dfrac{2}{\omega-1}$ we have that $\dfrac{f''}{f'}(z)=\dfrac{2}{z(\omega-1)}$, and also $$Sf(z)=\left(\dfrac{2}{z(\omega-1)}\right)'-\dfrac{1}{2}\left(\dfrac{2}{z(\omega-1)}\right)^2=\dfrac{-2(\omega+z\omega')}{z^2(\omega-1)^2}.$$

There exists a holomorphic mapping $\varphi:\D\to\D$ such that $\omega(z)=z^2\varphi(z)$ from where we obtain that $\omega'=2z\varphi+z^2\varphi'$ and therefore $$Sf(z)=\dfrac{-6\varphi-2z\varphi'}{(1-z^2\varphi)^2}.$$ Consider $\phi$ such that $\phi(z)=z\varphi(z)$ then $\phi'(z)=\varphi(z)+z\varphi'(z)$. Thus 
\begin{eqnarray*}
|Sf(z)|(1-|z|^2)^2 & = & \dfrac{2(1-|z|^2)^2}{|1-z\phi|^2}|2\varphi+\phi'|\\
 & \leq & 2\left(\dfrac{|2\varphi|(1-|z|^2)^2}{|1-z\phi|^2}+\dfrac{|\phi'|(1-|z|^2)^2}{|1-z\phi|^2}\right)\\
& \leq & 2\left(\dfrac{|2\varphi|(1-|z|^2)^2}{|1-z\phi|^2}+\dfrac{(1-|\phi|^2)(1-|z|^2)}{|1-z\phi|^2}\right).
\end{eqnarray*} Define $\Phi:\D\to \C$ by $\Phi(z)=\dfrac{\overline{z}-\phi(z)}{1-z\phi(z)}$ hence $|\Phi(z)|^2<1$ and $1-|\Phi(z)|^2=\dfrac{(1-|\phi|^2)(1-|z|^2)}{|1-z\phi|^2}$. Then

\begin{eqnarray*}
|Sf(z)|(1-|z|^2)^2 & \leq & 2\left(\dfrac{|2\varphi|(1-|z|^2)^2}{|1-z\phi|^2}+ 1-|\Phi|^2\right)\\
 & = & 2\left(\dfrac{2|\varphi|(1-|z|^2)}{1-|\phi|^2}\right)(1-|\Phi|^2)\\
 & \leq & 2(2|\varphi|+1)(1-|\Phi|^2),
\end{eqnarray*} which is equivalent to $$|Sf(z)|(1-|z|^2)^2+2(2|\varphi|+1)|\Phi|^2\leq 2(2|\varphi|+1).$$
\end{proof}

This result, can be understood as an extension to equation (9) in \cite{CDO11}.

\begin{cor}
$f\in Co(0)$ then $|Sf(z)|(1-|z|^2)^2\leq 6$.
\end{cor}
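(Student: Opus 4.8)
The plan is to read the bound off Theorem 3 directly. Starting from the inequality established there, namely
$|Sf(z)|(1-|z|^2)^2+2(2|\varphi|+1)|\Phi|^2\leq 2(2|\varphi|+1)$,
I would first discard the nonnegative term $2(2|\varphi|+1)|\Phi|^2$ on the left-hand side — equivalently, bound $1-|\Phi|^2\leq 1$ in the last line of the proof of Theorem 3 — to obtain $|Sf(z)|(1-|z|^2)^2\leq 2(2|\varphi|+1)$. It then only remains to bound $|\varphi(z)|$.

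The key observation is that $\varphi$ is exactly the holomorphic self-map of $\D$ produced in the proof of Theorem 3 through the factorization $\omega(z)=z^2\varphi(z)$, where $\omega$ is the Schwarz function attached to $f$ via $zf''(z)/f'(z)=2/(\omega(z)-1)$; in particular $|\varphi(z)|\leq 1$ for every $z\in\D$. Substituting this estimate yields $|Sf(z)|(1-|z|^2)^2\leq 2(2\cdot 1+1)=6$, which is the claimed bound. There is essentially no obstacle in this argument: the only point one must keep straight is that $\omega$ vanishes to second order at the origin — which is precisely what the expansion $zf''(z)/f'(z)=-2-2b_1z^2-2b_1z^3-6b_2z^3-\cdots$ from the proof of Theorem 3 guarantees — so that $\varphi=\omega/z^2$ is a genuine bounded holomorphic function on $\D$ with $\|\varphi\|_\infty\leq 1$, rather than merely a meromorphic one, and the Schwarz-type bound $|\varphi|\leq 1$ is legitimate.
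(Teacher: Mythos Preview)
Your argument is correct and is exactly the paper's approach: drop the nonnegative term $2(2|\varphi|+1)|\Phi|^2$ (equivalently, use $1-|\Phi|^2\leq 1$) and then invoke $|\varphi|\leq 1$ to bound $2(2|\varphi|+1)\leq 6$. The paper's proof is the one-line remark ``Since $\varphi:\D\to\D$ we have that the right side is less than or equal to $6$.''
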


\begin{proof} Since $\varphi:\D\to\D$ we have that the right side is less than or equal to 6.
\end{proof}

\begin{thm} Let $f:\D\to\C$ satisfying equation (\ref{Co(p)}). Then $f\in Co(p)$, $p\in [0,1)$ if and only if $$\textup{Re}\left\{1+z\dfrac{f''}{f'}(z)+q\right\}\leq -\dfrac{(1-|z|^2)(1+2a_p|z|+|z|^2)}{4(1+a_p|z|)^2}\left|z\dfrac{f''}{f'}(z)+q\right|^2 ,$$ where $a_p=\dfrac{2b_1p^2+O(p^3)}{-1+b_1p^2+O(p^3)}$ and $q=\dfrac{2p}{z-p}-\dfrac{2pz}{1-pz}$.
\end{thm}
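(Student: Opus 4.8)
The plan is to reduce Theorem~5 to the already-proved sharp inequality for $Co(0)$, namely~\eqref{improve-Co(0)}, by composing $f$ with the Möbius automorphism $T(z)=(z+p)/(1+pz)$ of $\mathbb{D}$. Writing $g=f\circ T$, the function $g$ is a concave mapping with a simple pole at the origin, so after renormalizing it by an affine map (which affects neither $zg''/g'$ nor the Schwarzian) we may assume $g\in Co(0)$. The bulk of the work is the book-keeping: expressing $1+z f''(z)/f'(z)$ in terms of the corresponding quantity for $g$ evaluated at $w=T^{-1}(z)$, and tracking how the factor $(1-|w|^2)$ and the dilation $\omega_g(w)=\lambda w^2$ transform. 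The term $q=\dfrac{2p}{z-p}-\dfrac{2pz}{1-pz}$ is exactly the correction that appears when one differentiates $\log T'$ twice and assembles the chain-rule expression for $zf''/f'$, so I expect
\[
1+z\frac{f''}{f'}(z)+q \;=\; \Big(1+w\frac{g''}{g'}(w)\Big)\cdot\frac{1}{\text{(a positive factor)}},
\]
with the positive factor coming from $|T'(w)|$ and $|w/z|$; this is what turns the $Co(0)$-inequality into the claimed one.

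Concretely, first I would record the transformation rule
\[
z\frac{f''}{f'}(z) = \frac{w}{T'(w)}\Big(\frac{g''}{g'}(w)\,T'(w)+T''(w)\Big),
\]
substitute $T(w)=(w+p)/(1+pw)$, and simplify; the outcome should be precisely $z f''(z)/f'(z) = (\text{unimodular-type factor})\cdot w g''(w)/g'(w) - q$, which both verifies the known first-order inequality $\mathrm{Re}\{1+zf''/f'+q\}<0$ and sets up the refinement. Next, from $g\in Co(0)$ and the extremal description after~\eqref{improve-Co(0)}, the dilation satisfies $\omega_g(w)=\lambda w^2$ at points of equality, and in general $1+w g''(w)/g'(w) = 2\omega_g/(\omega_g-1)\cdot(\text{something})$; more usefully, I would just invoke~\eqref{improve-Co(0)} directly for $g$:
\[
\mathrm{Re}\Big\{1+w\frac{g''}{g'}(w)\Big\}\le -\frac14(1-|w|^4)\Big|w\frac{g''}{g'}(w)\Big|^2 .
\]
Then I divide/multiply through by the positive conformal factor identified in the first step, using $1-|w|^2 = (1-|z|^2)|T'(w)|/(1-p^2)$ or rather $1-|w|^2 = (1-|z|^2)(1+pw)(1+p\bar w)/(1-p^2)$ after taking real parts, and collect everything on the right-hand side. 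The rational expression $1+2a_p|z|+|z|^2$ over $(1+a_p|z|)^2$, with $a_p=\dfrac{2b_1p^2+O(p^3)}{-1+b_1p^2+O(p^3)}$, must emerge as the image of the factor $(1-|w|^4)=(1-|w|^2)(1+|w|^2)$ under $w\mapsto z$, once $|w|^2$ is written as a Möbius function of $|z|$ along the relevant ray; here the coefficient $b_1$ enters because the renormalizing affine map relating $g$ to an element of $Co(0)$ depends on the Laurent coefficients of $f$ at $p$, and its derivative contributes the constant $a_p$.

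The main obstacle will be this last matching step: showing that the geometric factor coming from the Möbius change of variable collapses to exactly $\dfrac{(1+2a_p|z|+|z|^2)}{(1+a_p|z|)^2}$ with the stated $a_p$, rather than some more complicated function of $z$ and $\bar z$. The subtlety is that $1-|w|^4$ depends on $w$ and $\bar w$ separately, whereas the claimed bound depends only on $|z|$; reconciling these requires using that at (and only at) points of equality the dilation is $\lambda w^2$, so that $w\frac{g''}{g'}(w)$ has a special form forcing $w/\bar w$ to be determined, which is precisely the content of the equality case announced for Theorem~5 — hence I would prove the inequality and its equality statement simultaneously. A secondary technical point is verifying that the residue normalization~\eqref{Co(p)} (not merely $f(1)=\infty$) is what pins down the constant in the affine renormalization and therefore the exact value of $a_p$; a different residue would rescale $b_1$ and hence $a_p$ consistently, so the formula is normalization-correct. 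Everything else — the chain rule for the Schwarzian, $S(f\circ T)=(Sf\circ T)(T')^2$ since $ST=0$, and Schwarz's lemma applied to $\omega_g/w^2$ — is routine.
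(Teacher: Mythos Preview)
Your reduction to $Co(0)$ via $g=f\circ T$ will not produce the stated inequality, and the reason is structural, not a matter of bookkeeping. The constant $a_p$ in the theorem depends on the Laurent coefficient $b_1$ of the \emph{particular} function $f$; it is not a universal constant determined by the pole location $p$. In the paper's argument $a_p$ arises as $\varphi(0)$, where one writes
\[
M(z)\;:=\;1+z\frac{f''}{f'}(z)+q(z)\;=\;\frac{\omega(z)+1}{\omega(z)-1},\qquad \omega(z)=z\varphi(z),
\]
using only that $\mathrm{Re}\,M<0$ and $M(0)=-1$; the generalized Schwarz--Pick lemma applied to $\varphi$ (with $|\varphi(0)|=|a_p|$) then yields
\[
1-|z\varphi(z)|^{2}\;\ge\;\frac{(1-|z|^{2})(1+2|a_p|\,|z|+|z|^{2})}{(1+|a_p|\,|z|)^{2}},
\]
which is exactly the claimed factor. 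None of this uses a M\"obius change of variable.

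By contrast, your route feeds $g\in Co(0)$ into~\eqref{improve-Co(0)}, obtaining a bound with the factor $1-|w|^{4}$, and then pushes back through $w=(z-p)/(1-pz)$. Two things go wrong. First, $1-|w|^{4}$ becomes a function of $z$ \emph{and} $\bar z$ (through $|z-p|^{2}/|1-pz|^{2}$), not of $|z|$ alone, so it cannot equal the radial expression $(1-|z|^2)(1+2a_p|z|+|z|^2)/(1+a_p|z|)^2$. Second, and more fatally, your proposed mechanism for introducing $b_1$ --- the affine renormalization of $g$ --- contributes nothing: if $\tilde g=Ag+B$ then $\tilde g''/\tilde g'=g''/g'$, so neither $A$ nor $b_1$ can enter that way. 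Likewise, the chain-rule correction you expect to equal $q$ does not: one computes $1+w\,g''(w)/g'(w)=1+\dfrac{(z-p)(1-pz)}{1-p^{2}}\dfrac{f''}{f'}(z)-\dfrac{2p(z-p)}{1-p^{2}}$, which is not $M(z)$ (the coefficients of $f''/f'$ differ). So the ``positive factor'' you hope to divide out does not exist in the required form, and the equality-case heuristic you invoke cannot repair a pointwise inequality that is supposed to hold for \emph{all} $z\in\D$. The missing idea is precisely the direct Carath\'eodory--Schwarz representation of $M$ described above.
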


\begin{proof} We known that $f\in Co(p)$ if and only if the real part of $$M(z)=1+z\dfrac{f''}{f'}(z)+\dfrac{2p}{z-p}-\dfrac{2pz}{1-pz}$$ is negative. Since $z+2f'(z)/f''(z)$ has a unique fixed point at $p$ (see \cite[Thm. 2]{AW02}), we have that $1+zf''(z)/f'(z)$ is equal to $1$ when $z=0$, thus $M(0)=-1$. Therefore, exists a Schwarz's mapping $\omega$ such that $$1+z\dfrac{f''}{f'}(z)+\dfrac{2p}{z-p}-\dfrac{2pz}{1-pz}=\dfrac{\omega+1}{\omega-1}.$$ Then $\omega=(M+1)/(M-1)$, and $\omega(z)=z\varphi(z)$ where $\varphi:\D\to\D$ given by \begin{equation}\label{varphi}\varphi(z)=\dfrac{\dfrac{f''}{f'}(z)+\dfrac{2}{z-p}-\dfrac{2p}{1-pz}}{z\dfrac{f''}{f'}(z)+\dfrac{2p}{z-p}-\dfrac{2pz}{1-pz}}=\dfrac{(z-p)\dfrac{f''}{f'}(z)+2-\dfrac{2p(z-p)}{1-pz}}{z(z-p)\dfrac{f''}{f'}(z)+2p-\dfrac{2pz(z-p)}{1-pz}}.\end{equation} A straightforward calculations shows that $$\varphi(0)=\dfrac{2b_1p^2+O(p^3)}{-1+b_1p^2+O(p^3)}=a_p,$$ which tends to 0 when $p$ goes to 0. Now, $$|M+1|^2\leq |z|^2|\varphi(z)|^2|M-1|^2,$$ which is equivalent to $$\left|2+z\frac{f''}{f'}(z)+\dfrac{2p}{z-p}-\dfrac{2pz}{1-pz}\right|^2\leq |z|^2|\varphi(z)|^2\left|z\frac{f''}{f'}(z)+\dfrac{2p}{z-p}-\dfrac{2pz}{1-pz}\right|^2,$$ or $$4+4\text{Re}\left\{z\frac{f''}{f'}(z)+\dfrac{2p}{z-p}-\dfrac{2pz}{1-pz}\right\}\leq\left|z\frac{f''}{f'}(z)+\dfrac{2p}{z-p}-\dfrac{2pz}{1-pz}\right|^2(|z\varphi(z)|^2-1),$$ from where $$\text{Re}\left\{1+z\frac{f''}{f'}(z)+\dfrac{2p}{z-p}-\dfrac{2pz}{1-pz}\right\}\leq-\frac{1-|z|^2|\varphi(z)|^2}{4}\left|z\frac{f''}{f'}(z)+\dfrac{2p}{z-p}-\dfrac{2pz}{1-pz}\right|^2.$$ Then, using that the generalization of Schwarz's Lemma, we have that $$\text{Re}\left\{1+z\frac{f''}{f'}(z)+q\right\}\leq-\frac{1-|z|^2\left(\dfrac{|a_p|+|z|}{1+|a_p||z|}\right)^2}{4}\left|z\frac{f''}{f'}(z)+q\right|^2,$$ where $q=\dfrac{2p}{z-p}-\dfrac{2pz}{1-pz}$. Thus, the proof is completed.
\end{proof}

Observed that the inequality (\ref{improve-Co(0)}) is a particular case of this Theorem considering $p=0$, in which case $a_p=0$, or equivalently $\varphi(0)=0$. Since $\varphi(z)$ is given by equation (\ref{varphi}), we have that $\varphi(0)=-a_2-\dfrac{1}{p}-p$, then $|a_2|\leq 1/p+p$, and the equality case attained by $f(z)=e^{i\theta}k_p(e^{-i\theta}z)$, where $$k_p(z)=\dfrac{z}{(1-z/p)(1-pz)},$$ which maps the unit disk into a hole the complex plane minus the real segment $\left[\dfrac{1}{2-1/p-p},\dfrac{-1}{2+1/p+p}\right]$.



\begin{thebibliography}{11}
\bibitem{AW02} F.G. Avkhadiev, and K.-J. Wirths, Complex hole produce lower bounds for coefficients, Complex variable, Theory and Application: An International Journal. 47(7) (2002), pp. 553--563.\medskip

\bibitem{AW05} F.G. Avkhadiev, and K.-J. Wirths, Concave schlicht functions with bounded opening angle at infinity, Lobachevskii J. Math. 17 (2005), pp. 3--10.\medskip

\bibitem{APW04} F.G. Avkhadiev, Ch. Pommerenke, and K.-J. Wirths, On the coefficients of concave univalent functions, Math. Nachr. 271 (2004), pp. 3--9.\medskip

\bibitem{APW06} F.G. Avkhadiev, Ch. Pommerenke, and K.-J. Wirths, Sharp inequalities for the coefficients of concave schlicht functions, Comment. Math. Helv. 81 (2006), pp. 801--807.\medskip

\bibitem{BH12} B. Bhowmik, On concave univalent functions, Math. Nachr. 285 (2012), pp. 606--612.\medskip

\bibitem{BPW09} B. Bhowmik, S. Ponnusamy, and K.-J. Wirths, Concave functions, Blaschke products, and polygonal mappings, Siberian Mathematical Journal 50(4) (2009), pp. 609--615.\medskip

\bibitem{BPW10} B. Bhowmik, S. Ponnusamy, and K.-J. Wirths, Characterization and the pre-Schwarzian norm estimate for concave univalent functions, Monatsh. Math. 161 (2010), pp. 59--75.\medskip

\bibitem{BPW12} B. Bhowmik, and K.-J. Wirths, A sharp bound for the Schwarzian derivative of concave functions, Coll. Math. 128(2) (2012), pp. 245--251.\medskip

\bibitem{CDO11} M. Chuaqui, P. Duren, and B. Osgood, Schwarzian derivative of convex mappings, Ann. Acad. Sci. Fenn., 36 (2011), pp. 449--460.\medskip                                                    
\bibitem{CDO12} M. Chuaqui, P. Duren, and B. Osgood, Concave conformal mappings and pre-vertices of Schwarz-Christoffel mappings, Proc. Am. Math. Soc. 140(10) (2012), pp. 3495--3505.\medskip

\bibitem{CP07} L. Cruz, and Ch. Pommerenke, On concave univalent functions, Complex Var. Elliptic Equ. 52(2-3) (2007), pp. 153--159.\medskip

\bibitem{Wi04} K.-J. Wirths, A proof of the Livingston conjecture for the fourth and the fifth coefficient of concave univalent functions, Ann. Polon. Math. 83(1) (2004), pp. 87--93.\medskip





\end{thebibliography}
\end{document}